 \newtheorem{thm}{Theorem}
 \newtheorem{cor}[thm]{Corollary}
 \newtheorem{lem}[thm]{Lemma}
 \newtheorem{conj}[thm]{Conjecture}
 \theoremstyle{definition}
 \theoremstyle{remark}
 \newtheorem*{rem}{Remark}
 \numberwithin{equation}{section}
\begin{document}

%-------------------------------------------------------------------------
% editorial commands: to be inserted by the editorial office
%
%\firstpage{1} \volume{228} \Copyrightyear{2004} \DOI{003-0001}
%
%
%\seriesextra{Just an add-on}
%\seriesextraline{This is the Concrete Title of this Book\br H.E. R and S.T.C. W, Eds.}
%
% for journals:
%
%\firstpage{1}
%\issuenumber{1}
%\Volumeandyear{1 (2004)}
%\Copyrightyear{2004}
%\DOI{003-xxxx-y}
%\Signet
%\commby{inhouse}
%\submitted{March 14, 2003}
%\received{March 16, 2000}
%\revised{June 1, 2000}
%\accepted{July 22, 2000}
%
%
%
%---------------------------------------------------------------------------
%Insert here the title, affiliations and abstract:
%

\title[Non-Stabilizing Parallel Chip-Firing Games]
 {Non-Stabilizing Parallel Chip-Firing Games}

%----------Author 1
\author[Ji]{David Ji}

\address{Montgomery High School\\
Skillman, NJ, 08558
}

\email{david.ji@mtsdstudent.us}

%\thanks{This work was completed with the support of our
%\TeX-pert.}
%----------Author 2
\author[Li]{Michael Li}
\address{University High School\br
Irvine, CA, 92612}
\email{michaelli20180101@gmail.com}

%----------Author 3
\author[Wang]{Daniel Wang}
\address{Lakeside School \br
Seattle, WA, 98125}
\email{danielw25@lakesideschool.org}

%----------classification, keywords, date
\subjclass{05C57 68R10}

\keywords{Chip-firing game, Parallel chip-firing game, Graphs.}

\date{\today}

%%% ----------------------------------------------------------------------

\begin{abstract}
In 2010, Kominers and Kominers proved that any parallel chip-firing game on $G(V,\,E)$ with $|\sigma|\geq 4|E|-|V|$ chips stabilizes. Recently, Bu, Choi, and Xu made the bound exact: all games with $|\sigma|< |E|$ chips or $|\sigma|> 3|E|-|V|$ chips stabilize. Meanwhile, Levine found a ``devil's staircase'' pattern in the plot of the activity of parallel chip-firing games against their density of chips. The stabilizing bound of Bu, Choi, and Xu corresponds to the top and bottom stairs of this staircase, in which the activity is 1 and 0, respectively. In this paper, we analyze the middle stair of the staircase, corresponding to activity $\frac{1}{2}$. We prove that all parallel chip-firing games with $2|E|-|V|< |\sigma|< 2|E|$ have period $T\neq 3,\,4$. In fact, this is exactly the range of $|\sigma|$ for which all games are non-stabilizing. We conjecture that all parallel chip-firing games with $2|E|-|V|< |\sigma|<2|E|$ have $T=2$ and thus activity $\frac{1}{2}$. This conjecture has been proven for trees by Bu, Choi, and Xu, cycles by Dall'asta, and complete graphs by Levine. We extend Levine's method of conjugate configurations to prove the conjecture on complete bipartite graphs $K_{a,a}$. 
\end{abstract}

\maketitle

\section{Introduction}

The \emph{parallel chip-firing game} is an automaton on a simple finite connected graph $G(V,E)$ with vertex set $V$ and edge set $E$. At the beginning of the game, each vertex $v \in V$ holds $\sigma(v)$ \emph{chips}. Each round, all vertices with at least as many chips as neighbors simultaneously \emph{fire} one chip to each of their neighbors. All other vertices \emph{wait}. As in Levine \cite{levine}, we define a \textit{chip configuration} to be the map $\sigma:\, V\rightarrow \mathbb{N}_0$ taking each vertex $v$ to the number of chips on $v$. Let $|\sigma|=\sum_v\sigma(v)$ be the total number of chips. 

Bitar and Goles \cite{bitarandgoles} noted that every parallel chip-firing game is periodic with some minimal period length $T$. Games with $T=1$ are said to \emph{stabilize}. The possible period lengths of parallel chip-firing games on trees \cite{bitarandgoles}, cycles \cite{dallasta}, complete graphs \cite{levine}, and complete bipartite graphs \cite{jiang} have been determined. For general graphs, Kiwi et al. \cite{kiwi} showed that $T$ cannot be bounded by a polynomial in $|V|$. Additionally, Kominers and Kominers \cite{kominers} proved that all games with $|\sigma| \ge 4|E|-|V|$ chips stabilize. Bu, Choi, and Xu \cite{yunseo} made this bound exact, showing that games with $|\sigma| < |E|$ or $|\sigma| > 3|E|-|V|$ chips stabilize. Meanwhile, there exist non-stabilizing parallel chip firing games with $|\sigma|$ chips for each $|E| \le |\sigma| \le 3|E|-|V|$.

The phenomenon of \emph{mode-locking} in parallel chip-firing games was noted by Levine \cite{levine}, who found that there exist regimes of parallel chip-firing games in which adding even a relatively large number of chips does not change the \emph{activity} -- the average fraction of rounds in which a vertex fires. These regimes represent wide stairs in the \emph{devil's staircase} pattern that is revealed when plotting the activity of a parallel chip-firing game against the average value of $\sigma(v)$. The bound of Bu, Choi, and Xu \cite{yunseo} corresponds to the top and bottom stairs of this devil's staircase, in which the activity is 1 and 0, respectively. 

In this paper, we analyze the middle stair of the staircase, corresponding to activity $\frac{1}{2}$. We show that there are no parallel chip-firing games with $2|E|-|V|< |\sigma|< 2|E|$ that have $T = 3$ or $4$. 
\begin{thm}
\label{theorem1}
Any parallel-chip firing game on $G(V,E)$ with $T=3$ or $4$ has $|\sigma| \le 2|E|-|V|$ or $|\sigma| \ge 2|E|$.
\end{thm}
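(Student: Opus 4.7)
The plan is to reduce via the conjugate involution $\sigma \mapsto \sigma^*$, where $\sigma^*(v) = 2\deg(v) - 1 - \sigma(v)$: it preserves $T$, swaps firing and waiting every round, and sends $|\sigma| \mapsto 4|E| - |V| - |\sigma|$ and activity $a \mapsto 1 - a$. Because the forbidden interval $(2|E|-|V|,\,2|E|)$ is self-symmetric under $|\sigma| \mapsto 4|E|-|V|-|\sigma|$, it suffices to prove $|\sigma| \le 2|E|-|V|$ in each relevant case. Since the per-period firing count $s(v)$ is a harmonic function on the connected graph $G$, it is constant across vertices, so $a = s/T$ is well-defined and determines $s$ from $T$. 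Combined with the conjugate reduction to $a \le 1/2$, the cases left to check are $(T,a) \in \{(3,1/3),\,(4,1/4),\,(4,1/2)\}$.

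For $(T,a) = (3,1/3)$ and $(4,1/4)$, partition $V$ into firing classes $V_0, \ldots, V_{T-1}$ by firing round, and set $N_j(v) = |N(v) \cap V_j|$. The constraints $\sigma_t(v) \le \deg(v) - 1$ at each waiting round $t$, rewritten using $\sigma_{t+1} = \sigma_t - \deg(v)\,\mathbf{1}[v \in F_t] + N_t(v)$, give a family of linear upper bounds on $\sigma = \sigma_0$. I would select the bound coming from the waiting round immediately preceding the next firing cyclically: $\sigma(v) \le \deg(v)-1+N_{T-1}(v)$ for $v \in V_0$, and $\sigma(v) \le \deg(v)-1-\sum_{j<r} N_j(v)$ for $v \in V_r$ with $r \ge 1$. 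Summing over $v$, each $N_j(v)$ becomes an edge count $e(V_r, V_j)$; the $+e(V_0, V_{T-1})$ contribution cancels with $-e(V_{T-1}, V_0)$, and the remaining cross-class residuals are $0$ for $T = 3$ and $-e(V_0, V_2) - e(V_1, V_3) \le 0$ for $T = 4$, yielding $|\sigma| \le 2|E|-|V|$.

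The main obstacle is the self-conjugate case $(T,a) = (4,1/2)$. The vertices now split into six classes $V_{ij}$ indexed by $2$-subsets $\{i,j\} \subset \{0,1,2,3\}$, each giving two non-equivalent waiting-round upper bounds, and the period-$2$ classes $V_{02}, V_{13}$ interact asymmetrically with the period-$4$ classes $V_{01}, V_{03}, V_{12}, V_{23}$. A key structural input is that $T = 4 \ne 2$ forces at least one period-$4$ class to be non-empty, since otherwise $F_0 = F_2$ and $F_1 = F_3$ would collapse the period to $2$. The plan is to mimic the $(4, 1/4)$ summation by choosing one bound per class, guided by the shift-by-$2$ and conjugate symmetries acting on the six classes, so that the cross-class edge contributions cancel into a non-positive residual. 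I expect this matching of bounds across $V_{02}, V_{13}$ and the period-$4$ classes to be the technical heart of the proof; a naive choice leaves a positive residual involving edges between $V_{02}, V_{13}$ and the period-$4$ classes, so the fix likely requires an averaging argument or a case split on a sign condition such as $N_0(v) + N_3(v) \gtrless \deg(v)$, possibly combined with the round-averaged inequality $|\sigma| \ge \sum_{v \in F_t} \deg(v)$ and the conjugate identity to rule out the middle interval.
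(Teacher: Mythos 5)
Your reduction via the complement game and your summation over waiting-round constraints for the cases $(T,a)=(3,\tfrac13)$ and $(4,\tfrac14)$ are sound: choosing, for each vertex, the threshold inequality at the waiting round just before its firing round and summing does telescope the cross-class edge terms into a non-positive residual, giving $|\sigma|\le 2|E|-|V|$. (Minor point: for $v\in V_r$, $r\ge 1$, the constraint at round $r-1$ accumulates only $N_0(v),\dots,N_{r-2}(v)$, so the bound should read $\sigma(v)\le \deg(v)-1-\sum_{j\le r-2}N_j(v)$; your stated residuals match this corrected version, so I take the $\sum_{j<r}$ as a typo.) This is a genuinely different and more elementary route than the paper's, which builds a "valid chip assignment" of chips to edges in the style of Bu--Choi--Xu and shows every vertex is incident to at least one edge carrying a single chip, so that $|\sigma|\le 2|E|-|V|$ follows from counting light edges.

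The genuine gap is the case $(T,a)=(4,\tfrac12)$, which you leave unresolved and correctly identify as the obstacle to your summation scheme. But this case is in fact vacuous, by a result you do not invoke and which is the very first step of the paper's proof: the theorem of Scully, Jiang, and Zhang that no firing sequence is \emph{clumpy}, i.e.\ no cyclic sequence $F_0(v)\dots F_{T-1}(v)$ contains both the pattern $00$ and the pattern $11$. For $T=4$ with two firings per period, the only non-clumpy sequences are $1010$ and $0101$; hence every vertex would satisfy $F_0(v)=F_2(v)$ and $F_1(v)=F_3(v)$, forcing $U^2\sigma=\sigma$ and $T\le 2$, a contradiction. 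So your six classes $V_{ij}$ collapse to $V_{02}\cup V_{13}$, which cannot occur when $T=4$, and no averaging argument or case split is needed. With this observation inserted, your argument becomes a complete proof; without it, the proposal as written does not establish the theorem, since the speculative fixes you offer for the residual involving $V_{02}$, $V_{13}$ and the period-$4$ classes are not carried out.
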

We conjecture from simulating hundreds of thousands of games that any parallel chip-firing game with $2|E|-|V|< |\sigma| < 2|E|$ has $T=2$ and thus activity $\frac{1}{2}$. 
\begin{conj}
    \label{conj1}
    Any parallel chip-firing game on $G(V,E)$ with $2|E|-|V|< |\sigma|< 2|E|$ has $T=2$.
\end{conj}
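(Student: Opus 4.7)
The plan is to split the proof into cases by the period $T$ and the common per-vertex firing count $f$ per period (which is the same for all vertices by the standard harmonicity argument: periodicity forces $f_v \deg(v) = \sum_{u \sim v} f_u$, and $G$ is connected, so $f_v$ is harmonic and constant). For $T = 3$ the possibilities are $f \in \{1, 2\}$ and for $T = 4$ they are $f \in \{1, 2, 3\}$. Levine's conjugation $\sigma'(v) = 2\deg(v) - 1 - \sigma(v)$ swaps activity $a$ with $1 - a$ and $|\sigma|$ with $4|E| - |V| - |\sigma|$, so an upper bound $|\sigma| \leq 2|E| - |V|$ in the $f = k$ case is equivalent to a lower bound $|\sigma| \geq 2|E|$ in the $f = T - k$ case. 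It therefore suffices to prove $|\sigma| \leq 2|E| - |V|$ when $T = 3, f = 1$ and when $T = 4, f \in \{1, 2\}$.

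For $T = 3, f = 1$ and $T = 4, f = 1$ I would label the firing sets $A_0, \ldots, A_{T-1}$ and, for each $v \in A_i$, track $\sigma_t(v)$ across the period using $\sigma_{t+1}(v) = \sigma_t(v) - \deg(v)\chi_t(v) + d_t(v)$ with $d_t(v) = |N(v) \cap A_t|$. Combining the firing inequalities $\sigma_t(v) \geq \deg(v)$ when $v \in A_t$ and $\sigma_t(v) < \deg(v)$ otherwise with chip nonnegativity yields per-vertex upper bounds on $\sigma_0(v)$ that are affine in $\deg(v)$ and the $d_t(v)$'s; for instance $\sigma_0(v) \leq 2\deg(v) - d_0(v) - d_1(v) - 1$ for $v \in A_0$ when $T = 3$. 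Summing over $v$ and using $\sum_{v \in A_i} d_j(v) = e_{ij}$ (edges between firing sets) makes the degree and edge contributions telescope to $2|E| - |V|$.

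The main obstacle is the self-dual case $T = 4, f = 2$ with activity $\frac{1}{2}$, where conjugation provides no reduction. Here I would exploit the minimality of the period: writing $g(v) = \chi_0(v) + \chi_1(v) \in \{0, 1, 2\}$ for the number of times $v$ fires in the first half-period, a direct computation shows $\sigma_2 - \sigma_0 = \Delta g$, where $(\Delta g)(v) = \sum_{u \sim v} g(u) - \deg(v) g(v)$ is the graph Laplacian. If $g$ were constant on the connected graph $G$ then $\sigma_2 = \sigma_0$ and $T$ would divide $2$, contradicting $T = 4$; so $g$ is non-constant, which forces the existence of a vertex firing at two consecutive times (pattern $\{0, 1\}$ if some $g(v) = 2$, or $\{2, 3\}$ if some $g(v) = 0$). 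I would then enumerate the six possible firing patterns $\{0,1\}, \{0,2\}, \{0,3\}, \{1,2\}, \{1,3\}, \{2,3\}$, derive analogous per-pattern upper bounds on $\sigma_0(v)$, and sum them. The hard part will be checking that this six-type sum telescopes correctly and that the structural consequence of non-constant $g$ provides just enough slack to push the total below $2|E| - |V|$; that is where the subtlety of the period-$4$, activity-$\frac{1}{2}$ regime lives.
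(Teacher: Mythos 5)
The statement you are proving is \Cref{conj1}, which the paper itself does not prove: it establishes only that no game in this chip range has $T=3$ or $T=4$ (\Cref{theorem1}) and that the conjecture holds on $K_{a,a}$ (\Cref{theorem2}). Your proposal addresses only the periods $T=3$ and $T=4$ and is silent on $T\geq 5$, so even if every step were carried out it would yield \Cref{theorem1}, not the conjecture. Ruling out all larger periods is exactly the part that remains open, and nothing in your outline (harmonicity of the per-period firing count, complement duality, per-vertex affine bounds) extends to it; the case analysis over firing patterns does not close for general $T$.

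Within the $T\in\{3,4\}$ portion there are two further problems. First, the claim that the per-vertex bounds ``telescope to $2|E|-|V|$'' is asserted rather than verified, and it is sensitive to which firing inequalities you sum: for $T=3$, $f=1$, combining your stated bound $\sigma_0(v)\leq 2\deg(v)-d_0(v)-d_1(v)-1$ on $A_0$ with the analogous bounds on $A_1$ and $A_2$ leaves an uncancelled excess of $e_{12}$ (edges between $A_1$ and $A_2$); one must instead use the weaker bound $\sigma_0(v)\leq\deg(v)-1$ on $A_1$ for the edge terms to cancel. This fragility is precisely why the paper abandons pointwise summation in favor of a global assignment of chips to edges (heavy and light edges) and proves that every vertex is incident to at least one light edge, which gives $|\sigma|\leq 2|E|-|V|$ uniformly. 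Second, the case $T=4$, $f=2$ that you identify as the main obstacle is in fact vacuous: by the non-clumpiness theorem of Scully, Jiang, and Zhang (\Cref{non-clumpy}), no cyclic firing sequence contains both $00$ and $11$, so the patterns $\{0,1\},\{1,2\},\{2,3\},\{0,3\}$ are all impossible and every vertex has pattern $1010$ or $0101$; your own Laplacian identity then gives $g\equiv 1$, hence $\sigma_2=\sigma_0$, contradicting minimality of $T=4$. Your outline comes within one step of this conclusion but, lacking that lemma, leaves the case as an admitted open enumeration rather than a completed argument.
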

\begin{rem}
    Bitar and Goles \cite{bitarandgoles} showed that in a stable parallel chip-firing game, either all or no vertices fire. It follows that all stable games have $|\sigma|\leq \sum_v \deg v -1=2|E|-|V|$ or $|\sigma|\geq \sum_v \deg v=2|E|$. Thus, the chip bound in \Cref{conj1} is exactly the bound in which all games are non-stabilizing.
\end{rem}
\Cref{conj1} has been proven for trees by Bu, Choi, and Xu \cite{yunseo}, cycles by Dall'asta \cite{dallasta}, and complete graphs by Levine \cite{levine}. We prove \Cref{conj1} for complete bipartite graphs of the form $K_{a,a}$.
\begin{thm}
\label{theorem2}
    Any parallel-chip firing game on $K_{a,a}$ with $2a^2 - a < |\sigma| < 2a^2$ has $T=2$.
\end{thm}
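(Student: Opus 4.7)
The strategy is to extend Levine's method of conjugate configurations from $K_n$ to the bipartite graph $K_{a,a}$. Because every vertex of $K_{a,a}$ has degree $a$, I define the conjugate configuration $\tilde\sigma(v) := 2a - 1 - \sigma(v)$. A direct case analysis on whether $v$ fires shows that $v$ fires in $\sigma$ if and only if $v$ waits in $\tilde\sigma$, and that conjugation commutes with the firing dynamics: $\widetilde{\sigma'} = (\tilde\sigma)'$. Hence conjugation is a period-preserving involution on configurations, sending the chip total $|\sigma|$ to $4a^2 - 2a - |\sigma|$.

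The goal is to rule out $T \ge 3$ for every game on $K_{a,a}$ with $2a^2 - a < |\sigma| < 2a^2$. Because this range is non-stabilizing by the remark after \Cref{conj1}, we already have $T \ge 2$, and \Cref{theorem1} eliminates $T = 3, 4$, so only $T \ge 5$ remains. A useful preliminary observation on $K_{a,a}$ is that every vertex in a given periodic orbit shares the same per-period firing count: if $k_v$ denotes the number of times $v$ fires per period, the balance identity $a\,k_v = \sum_{u \sim v} k_u$ (which holds because chips at $v$ return to their initial value after one period) forces $k_v$ to be constant on $A$, constant on $B$, and equal across the two sides.

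Write $f_A^{(t)}$ and $f_B^{(t)}$ for the firing counts on the two sides at time $t$, and note that the update for $v \in A$ depends only on $\sigma(v)$ and on $f_B^{(t)}$ (symmetrically for $v \in B$). Under conjugation, firing and waiting vertices swap within each side, so $(f_A^{(t)}, f_B^{(t)})$ is replaced by $(a - f_A^{(t)}, a - f_B^{(t)})$. I would exploit this rigidity, together with the structural symmetry of the $A$- and $B$-multisets of chip values, to carry out an extremal argument: using the lower bound $|\sigma| > 2a^2 - a$, track the largest chip value on each side at each time and apply conjugation to identify $\sigma_{t+2}$ with $\sigma_t$ for every $t$, forcing $T = 2$.

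The main obstacle is this last step. Unlike on $K_n$, where Levine can exploit the transitive $S_n$-action on a single vertex class, $K_{a,a}$ has two distinct sides and the symmetry is only $S_a \times S_a$. Moreover, numerical experiments already at $a = 3$ exhibit period-$2$ orbits on $K_{3,3}$ in which both sides fire simultaneously in every round, so one cannot rule out $T \ge 5$ via a clean ``$A$-rounds alternate with $B$-rounds'' dichotomy. The technical heart of the proof is therefore designing a bipartite analogue of Levine's pairing functional that jointly couples the two chip-value multisets and is extremized precisely on period-$2$ orbits.
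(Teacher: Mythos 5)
There is a genuine gap here, and it sits exactly where you flag it: the ``technical heart'' you defer to is the entire proof. What you call the conjugate configuration, $\tilde\sigma(v)=2a-1-\sigma(v)$, is what the paper (following Jiang) calls the \emph{complement} game, and it is not Levine's device. Levine's conjugate configurations form a family of $a$ configurations $c^1\sigma,\dots,c^a\sigma$, where $c^j\sigma(L_i)=\sigma(L_i)+j-a$ for $i\le j$ and $c^j\sigma(L_i)=\sigma(L_i)+j$ for $i>j$ (vertices sorted by chip count, and similarly on $R$). The paper proves that each $c^j\sigma$ has the same activity as $\sigma$ (via a careful induction showing the firing counts $u_t$ differ by at most $1$ vertexwise), and then runs an averaging/pigeonhole argument over $j=1,\dots,a$: the chip bound $|\sigma|>2a^2-a$ forces $\sum_j\bigl(\ell_R(c^j\sigma)+r_L(c^j\sigma)\bigr)>a^2-a$, so for some $j$ every vertex of $R$ fires on round $0$ or $1$ of $c^j\sigma$; a separate lemma then upgrades ``fires once in the first two rounds'' to ``fires at least $t$ times in $2t$ rounds,'' giving $A(\sigma)\ge\tfrac12$. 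Only at that point does the complement game enter, supplying the matching bound $A(\sigma)\le\tfrac12$; by itself the involution $\sigma\mapsto\tilde\sigma$ cannot pin down the activity, since it merely exchanges the two inequalities you would need to prove. Finally, $A(\sigma)=\tfrac12$ plus the non-clumpy lemma (firing sequences must then be $10$ or $01$) gives $T=2$; note this last step does not need \Cref{theorem1} at all, and ``$T=2$'' does not follow from merely excluding $T=3,4$.

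Two smaller points. First, your balance identity $a\,k_v=\sum_{u\sim v}k_u$ and the resulting constancy of per-period firing counts is correct and is the paper's \Cref{constant_firing}, but it is only a preliminary. Second, your claimed obstruction --- that one cannot cleanly alternate $A$-rounds with $B$-rounds --- is real, and it is precisely why the paper works with the activity functional and the multiset-level quantities $\ell_R$ and $r_L$ rather than with any round-by-round coupling of the two sides. As written, your proposal correctly identifies the destination ($A(\sigma)=\tfrac12$ forced by a conjugation symmetry) but supplies neither the correct family of conjugates nor the counting argument that makes the symmetry bite, so it does not constitute a proof.
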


\section{The Setting}
\label{setting}

We define the \emph{step operator} $U$, where $U\sigma$ is the chip configuration after one round of firing on chip configuration $\sigma$. We let $U^0\sigma = \sigma$ and $U^t\sigma = UU^{t-1}\sigma$. We call $U^t\sigma$ the \emph{chip configuration on round $t$}.

We define the \textit{period} of a parallel chip-firing game to be the smallest $T$ such that there exists a round $t$ with $U^t\sigma = U^{t+T}\sigma$. We denote the smallest such $t$ as $t_0=0$. That is, we consider all parallel chip-firing games to begin on round $t_0$. A \textit{cycle} of a game is defined as the $T$ rounds from round $t+Tn$ to round $t+T(n+1)-1$ for $n,t\geq 0$. 

We define $F_t(v)$ to be $1$ if $v$ fires on round $t$ and $0$ if $v$ waits. Jiang \cite{jiang} showed that all vertices fire the same number of times in a cycle.

\begin{lem}[{\cite[Proposition 2.5]{jiang}}]  \label{constant_firing}
    In any parallel chip-firing game, $U^T\sigma(v)=\sigma(v)$ for all $v\in V$ if and only if $\sum_{t=0}^{T-1} F_t(v)$ is the same for all $v\in V$.
\end{lem}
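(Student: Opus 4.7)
The plan is to derive a single book-keeping identity linking $U^T\sigma(v)-\sigma(v)$ to the firing counts $n(v):=\sum_{t=0}^{T-1} F_t(v)$, and then read both directions of the equivalence off of it. Since a firing of $v$ removes $\deg(v)$ chips from $v$ while each firing of a neighbor $u$ of $v$ contributes one chip to $v$, telescoping the one-round update from $t=0$ to $t=T-1$ should yield
$$U^T\sigma(v)-\sigma(v) \;=\; -\deg(v)\,n(v) + \sum_{u \sim v} n(u).$$
Once this identity is in hand, the lemma becomes a purely combinatorial statement about the function $n:V\to\mathbb{Z}_{\geq 0}$.

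From the identity the backward implication is immediate: if $n\equiv c$, then the right-hand side is $-\deg(v)c + \deg(v)c = 0$ at every $v$, so $U^T\sigma=\sigma$. For the forward implication, the hypothesis $U^T\sigma=\sigma$ rewrites as $\deg(v)\,n(v)=\sum_{u\sim v} n(u)$ for every $v$, which says that $n$ is discrete-harmonic on $G$. To conclude that $n$ is constant I would invoke the standard graph-theoretic maximum principle: choose $v^\ast$ attaining $\max n$; the averaging identity at $v^\ast$ combined with the bound $n(u)\leq n(v^\ast)$ forces $n(u)=n(v^\ast)$ for every neighbor $u$ of $v^\ast$; iterating this step along paths in the connected graph $G$ propagates the maximum to every vertex.

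The only place where anything could go wrong is the maximum-principle propagation, and the natural obstacle there — failure of propagation on a disconnected graph — is ruled out by the paper's standing assumption that $G$ is a simple finite \emph{connected} graph. The identity itself is a routine accounting step, so the proof should consist of essentially these two ingredients with no further complications.
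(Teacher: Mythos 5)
Your proof is correct: the telescoped conservation identity $U^T\sigma(v)-\sigma(v)=-\deg(v)\,n(v)+\sum_{u\sim v}n(u)$ is the right bookkeeping, and the forward direction via the discrete maximum principle for harmonic functions on a connected graph is sound. The paper does not prove this lemma itself but cites it from Jiang, and your argument is the standard one underlying that reference, so there is nothing further to reconcile.
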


The \emph{firing sequence} of $v$ is the binary sequence $F_0(v)\dots F_{T-1}(v)$, with indices taken modulo $T$. Jiang, Scully, and Zhang \cite{scully} defined a firing sequence to be \emph{clumpy} if it has both $00$ and $11$ (i.e. 1001 is clumpy). They showed that the firing sequence of a vertex in a parallel chip-firing game is never clumpy.

\begin{lem}[{\cite[Theorem 6.2]{scully}}]  \label{non-clumpy}
    Clumpy firing sequences do not occur in the parallel chip-firing game.
\end{lem}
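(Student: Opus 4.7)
The plan is a proof by contradiction. Suppose some vertex $v_0$ has a clumpy firing sequence in a parallel chip-firing game with period $T$. By cyclically shifting the time index, I may assume $F_0(v_0) = F_1(v_0) = 1$, and I take $s$ to be the earliest round in $\{1, 2, \ldots, T-1\}$ with $F_s(v_0) = F_{s+1}(v_0) = 0$, which exists by clumpiness.

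The main tool is the chip-evolution identity $\sigma_{t+1}(v) = \sigma_t(v) + \sum_{u \sim v} F_t(u) - \deg(v)\, F_t(v)$. Writing $g_t = \sum_{u \sim v_0} F_t(u)$ for the number of firing neighbors of $v_0$, the two consecutive firings at $v_0$ on rounds $0,1$ force $g_0 \geq 2\deg(v_0) - \sigma_0(v_0)$, while the two consecutive waits at $v_0$ on rounds $s,s+1$ force $g_s < \deg(v_0) - \sigma_s(v_0)$. Combining these with Lemma~\ref{constant_firing} (which gives $\sum_{t=0}^{T-1} F_t(v) = k$ for all $v$, so $\sum_{t=0}^{T-1} g_t = k\deg(v_0)$), I would seek to propagate the asymmetry between ``high-$g_t$'' and ``low-$g_t$'' rounds into a global obstruction.

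The natural mechanism to carry this propagation is a convex potential of the form $\Phi_t = \sum_v \phi(\sigma_t(v))$ for some strictly convex $\phi$; the increment $\Phi_{t+1} - \Phi_t$ can be rewritten in terms of edge-discrepancies $F_t(u) - F_t(v)$ for $uv \in E$, after which periodicity forces $\sum_{t=0}^{T-1}(\Phi_{t+1} - \Phi_t) = 0$. The plan is to use the above local inequalities at rounds $0$ and $s$ (and at the transition rounds between the $11$ and $00$ blocks in $v_0$'s sequence) to show this cumulative telescoping sum cannot vanish, contradicting periodicity.

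The main obstacle will be going from the local statement about $v_0$ to a global invariant, since $\Phi$ receives contributions from every vertex and the parallel updates entangle the per-round changes. A fallback, more combinatorial route (closer in spirit to Scully, Jiang, and Zhang) is to induct on the cyclic distance within the period between the $11$ block at rounds $0,1$ and the $00$ block at rounds $s, s+1$: in the base case the transition is short enough to chase chip counts through the intervening rounds directly; in the inductive step one tries to extract a strictly shorter clumpy game, using Lemma~\ref{constant_firing} to preserve the constant-firing-count property. I expect the careful chip-accounting during the transition block, together with tracking the firing status of $v_0$'s neighbors across that block, to be the main technical challenge.
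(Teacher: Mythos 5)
There is a genuine gap: your submission is a plan, not a proof. The two local inequalities you derive are correct --- writing $g_t=\sum_{u\sim v_0}F_t(u)$, consecutive firings at rounds $0,1$ give $g_0\ge 2\deg(v_0)-U^0\sigma(v_0)$, and consecutive waits at rounds $s,s+1$ give $g_s<\deg(v_0)-U^s\sigma(v_0)$ --- but neither of your proposed mechanisms for turning these into a contradiction is carried out, and the central difficulty is exactly the step you defer. For the convex-potential route, $\sum_{t=0}^{T-1}(\Phi_{t+1}-\Phi_t)=0$ holds by periodicity for \emph{every} periodic game, so to get a contradiction you would need to show the increments are, say, all nonnegative with at least one strictly positive; nothing in your sketch establishes a sign on $\Phi_{t+1}-\Phi_t$, and the rewriting ``in terms of edge-discrepancies'' does not obviously produce one for a general convex $\phi$. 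The fallback induction on the cyclic distance between the $11$ block and the $00$ block is likewise not set up: you do not say what the induction actually reduces to, and ``extracting a strictly shorter clumpy game'' is not a defined operation. As it stands, no contradiction is ever reached.

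For context, the paper does not prove this lemma at all; it cites it as Theorem 6.2 of Scully, Jiang, and Zhang, where it is the main theorem of that paper (resolving a conjecture of Jiang) and is proved by a substantial global argument built on their ``motor'' machinery, not by a one-vertex chip count or a convexity/telescoping argument. That history is a strong signal that a short local argument of the kind you sketch is unlikely to close, and that an honest proof here would essentially have to reproduce a significant portion of their paper. If your goal is only to justify the lemma's use in this paper, the appropriate move is the citation; if your goal is a self-contained proof, you need to supply the missing global mechanism rather than gesture at two candidates.
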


We call the firing sequence of a vertex $v$ \textit{dense} if for each pair of rounds $0\leq a < b$ with $F_a(v)=F_b(v)=1$, there exists some round $t_u$ with $a < t_u \le b$ for each vertex $u$ in the neighborhood of $v$ such that $F_{t_u}(u) = 1$. 

As defined by Kominers and Kominers \cite{kominers}, we call a vertex $v \in V$ \textit{abundant} on round $t$ if $U^t\sigma(v) \ge 2\deg(v)$. Kominers and Kominers \cite{kominers} showed that abundant vertices do not exist in non-stabilizing games. Jiang \cite{jiang} defined the \emph{complement} of a non-stabilizing parallel chip-firing game on $G=(V,E)$ with chip configuration $\sigma$ as another game on $G$ with chip configuration $\sigma_c$, where $\sigma_c(v)=2\deg(v) - 1 - \sigma(v)$. By definition, we have $|\sigma_c|+|\sigma|=4|E|-|V|$. We let $F^c_t(v)$ be 1 if $v$ fires on round $t$ in the complement game and 0 if it does not. We see that $F^c_0(v)=1$ if and only if $F_0(v)=0$. Jiang \cite{jiang} showed that complement games remain complements in future rounds; it follows that the period of the complement game equals that of the original, and that $F^c_t(v)=1-F_t(v)$ for all $t\geq 0$. 

\section{Main Results}
\subsection{General Graphs}

Let $S_t$ be the set of vertices that fire on round $t$ but not on any prior round $t'$ with $0 \le t'<t \le T-1$. Let $e_t$ be the set of edges with both endpoints in $S_t$ and let $E_{t,t'}$ be the set of edges with one endpoint in $S_t$ and one in $S_{t'}$. Define $e_t(v)$ and $E_{t,t'}(v)$ as the sets of edges in $e_t$ and $E_{t,t'}$, respectively, with $v$ as an endpoint. Finally, let $E_{t} = \bigcup_{m = 0}^{t-2} E_{m,t}$ and let $E_t(v)$ be the set of edges in $E_t$ with $v$ as an endpoint. We take all indices modulo $T$. %(i.e. $S_T=S_0$ and $E_{-1,0}=E_{T-1,0}$).

In the following proofs, we assign chips as in Bu, Choi, and Xu \cite{yunseo} such that every chip is assigned to exactly one edge and a chip on vertex $v$ is assigned to an edge with $v$ as an endpoint. Furthermore, each chip is only fired across the edge it is assigned to. A chip assignment is \textit{valid on round 0} if each edge in $E_{0,1}$ has one chip assigned to it. A chip assignment is \textit{valid on round $t$} for $t\geq 1$ if, for all $1\leq k\leq t$, each edge in $e_k$ has one chip assigned to it from each endpoint, each edge in $E_{k-1,k}$ has one or two chips assigned to it, and each edge in $E_{k,k'}$ has one chip assigned to it for all $k'\neq k-1$. Finally, a chip assignment is \emph{valid} if it is valid on round $T$. 

We first show that a valid chip assignment exists for all parallel chip-firing games with $T\geq 3$ in which no vertex fires twice in a row and all firing sequences are dense. Call such a game \emph{compliant}.

\begin{lem} \label{validassignment}
    For any compliant parallel chip-firing game, there exists a valid chip assignment.
\end{lem}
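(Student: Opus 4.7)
My plan is to construct the valid chip assignment explicitly from the firing pattern, exploiting the alternation forced by compliance.

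Preliminaries. By Lemma \ref{constant_firing} every vertex of a compliant game fires the same number of times per period. Combined with the no-two-in-a-row and density conditions, this forces the firings on each edge $uv$ to strictly alternate: between any two consecutive firings of $u$, the neighbour $v$ fires exactly once, and vice versa. When $uv\in e_k$ the same arguments in fact force $u$ and $v$ to have identical firing sequences. Edges thus split into three types: same-round edges $e_k$, adjacent-round edges $E_{k-1,k}$, and distant edges $E_{t,t'}$ with $|t-t'|\geq 2$.

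Construction. On each edge I prescribe a chip placement: (a) $uv\in e_k$ receives one chip at $u$ and one chip at $v$, which swap each time the endpoints fire together; (b) $uv\in E_{t,t'}$ with $|t-t'|\geq 2$ receives a single chip placed initially at the earlier-firing endpoint, bouncing back and forth thereafter; (c) $uv\in E_{k-1,k}$ receives one chip at the $S_{k-1}$-endpoint, together with an optional second chip that can sit on either side. Chips then evolve with firings in the natural way: whenever a vertex $v$ fires, each chip assigned to an incident edge moves from $v$'s side to the other endpoint. The alternation property guarantees that the required chip is always on the firing side when its endpoint fires, and an induction on $t$ from $0$ to $T$ verifies that the chip-count conditions in the definition of valid on round $t$ are exactly what these placements give.

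Matching $\sigma(v)$ and main obstacle. The subtle step is to choose the optional extras on the $E_{k-1,k}$ edges so that the initial chip total at every vertex $v$ equals the true $\sigma(v)$. Under the base placement (no extras), a vertex $v\in S_t$ holds exactly $\deg(v)-p(v)$ chips, where $p(v)$ is the number of neighbours first firing strictly before $v$. Using the firing inequalities $\sigma_{t-1}(v)<\deg(v)\leq \sigma_t(v)$ together with the compliance structure, the residual $\sigma(v)-(\deg(v)-p(v))$ is nonnegative and bounded by the number of edge-side slots available to $v$ on its $E_{t-1,t}$ and $E_{t,t+1}$ edges. I then need to reconcile the per-vertex residuals globally: both endpoints of each $E_{k-1,k}$ edge must agree on the number and side of its optional chips. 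The main obstacle is this global compatibility step, which I expect to handle by a local argument on each $E_{k-1,k}$ edge, showing that each endpoint has enough adjacent-round neighbours to absorb its demand independently of the choices made at the other end.
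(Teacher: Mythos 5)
Your construction is, at bottom, the same greedy assignment as the paper's (one chip per edge, with the surplus absorbed as optional second chips on adjacent-round edges), but the proposal stops exactly at the step that carries the whole lemma: you name the placement of the optional extras and the resulting endpoint-compatibility as the ``main obstacle'' and only say you \emph{expect} to handle it. That step is not routine bookkeeping, and as stated your bound on the residual is too weak to close it. The residual of a vertex $v\in S_t$ is $\sigma(v)-(\deg(v)-p(v))=U^t\sigma(v)-\deg(v)$, and the inequality you actually need is the strict bound $U^t\sigma(v)-\deg(v)<|E_{t-1,t}(v)|$, which follows from $U^{t-1}\sigma(v)<\deg(v)$ together with the fact that density prevents any neighbour of $v$ from firing twice before round $t$, so that on round $t-1$ the vertex $v$ receives exactly one chip per edge of $E_{t-1,t}(v)$ and nothing else. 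With that bound the extras of $v$ fit entirely on $E_{t-1,t}(v)$, i.e.\ on edges where $v$ is the \emph{later}-firing endpoint; since the earlier endpoint of such an edge places its own extras only on its \emph{own} $E_{t-2,t-1}$ edges, no edge ever receives an extra from both sides and the global compatibility problem you are worried about simply does not arise. Your version, which allows the residual to spill onto both $E_{t-1,t}(v)$ and $E_{t,t+1}(v)$, manufactures a two-sided conflict that the correct inequality rules out. This is precisely why the paper assigns each vertex's chips at the round it first fires rather than at round $0$: the count $\deg(u)\le U^t\sigma(u)<\deg(u)+|E_{t-1,t}(u)|$ makes the choice local and conflict-free by fiat.

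Two further points need attention in your round-$0$ formulation. First, the trichotomy ``same-round / adjacent-round / distant'' must be read cyclically: a vertex $v_0\in S_0$ can have $\sigma(v_0)>\deg(v_0)$, and its excess (which the paper bounds by $|E_{0,T-1}(v_0)|$ using $U^{T-1}\sigma(v_0)<\deg(v_0)$) must be absorbed on the edges to $S_{T-1}$, which your $|t-t'|\ge 2$ rule would classify as distant and hence incapable of carrying a second chip. Second, the claim that ``the required chip is always on the firing side when its endpoint fires'' needs the small same-round caveat the paper makes explicit: when the window of density closes exactly at a simultaneous firing, one must regard the chip as being fired across and immediately back within the same round. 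Neither issue is fatal, but together with the unresolved residual step they mean the proposal does not yet constitute a proof.
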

\begin{proof}
We proceed by induction on $t$ to assign the chips held by vertices in $S_t$ on round $t$. As our base case, we assign one chip from each vertex $v\in S_0$ to each edge in $E_{0,1}(v)$. This assignment is valid on round 0. 

\begin{comment}
We take $t = 1$ as our base case. Consider a vertex $v \in S_1$. Note that $v$ did not fire on round 0, so $\sigma(v)<\deg(v)$. Since only the vertices in $S_0$ fire on round 0, we have that
\[
U\sigma(v) = |E_{0,1}(v)| + \sigma(v) < |E_{0,1}(v)| + \deg(v). 
\] 

Since $v$ fires on round 1, $U\sigma(v)\geq \deg(v)$. Thus, we can assign one chip to each of its edges. Then there are $U\sigma(v)-\deg(v)<|E_{0,1}(v)|$ remaining chips, which we assign arbitrarily to distinct edges in $|E_{0,1}(v)|$. Each edge in $E_{1,2}(v)$ or $e_{1}(v)$ is thus assigned 1 chip from $v$, and each edge in $E_{0,1}(v)$ is assigned 1 or 2 chips from  $v$. Assigning chips for all $v\in S_1$ yields a chip assignment that is valid on round 1.
\end{comment}

We take as our inductive hypothesis that we have a chip assignment that is valid on round $t-1$ for $1\leq t\leq T-1$. %the edges emanating from all vertices $v' \in S_k$ for all $t_0+1 \le k \le t-1$ are all assigned so that one chip is assigned to edges in $E_k(v') \setminus E_{k-1,k}(v')$, either one or two chips assigned to edges in $E_{k-1,k}(v')$, one chip assigned to edges in $e_k(v')$ and one chip assigned to the rest of the emanating edges from $v'$.
We keep all of the previous assignments and consider a vertex $u \in S_t$. We have $U^{t-1}\sigma(u)<\deg(u)$. Since all vertices have dense firing sequences, no neighbor of $u$ fires twice before round $t$. Therefore, on round $t-1$, $u$ only receives chips from neighbors in $S_{t-1}$, yielding 
\[
U^t\sigma(u) = |E_{t-1,t}(u)| + U^{t-1}\sigma(u) < |E_{t-1,t}(u)| + \deg(u).
\] 

Since $u$ fires on round $t$, we have $U^t\sigma(u)\geq\deg(u)$. By the inductive hypothesis, there is exactly one chip already assigned to each edge in $E_{t-1,t}(u)$. Furthermore, these chips must belong to $u$ on round $t$, so $u$ holds $U^t\sigma(u)-|E_{t-1,\,t}(u)|\geq \deg(u)-|E_{t-1,t}(u)|$ unassigned chips. We thus assign one chip to each edge that is not in $E_{t-1,t}(u)$. There are fewer than $|E_{t-1,t}(u)|$ remaining chips, which we arbitrarily assign to distinct edges in $E_{t-1,t}(u)$. Each edge in $E_{t-1,t}(u)$ is thus assigned 1 or 2 chips from $u$, and each other edge with $u$ as an endpoint is assigned 1 chip from $u$. Assigning chips for all $u\in S_t$ yields a chip assignment that is valid on round $t$.

We also check that chips can fire across only the edges they are assigned to. Consider a vertex $w$ and a neighboring vertex $w'$ such that $F_t(w)=1$ and $F_m(w)=1$ for some $m<t$ with $F_i(w)=0$ for all $m<i<t$. Then $w$ must have fired a chip assigned to the edge $(w,w')$ to $w'$ on round $m$. Since $w$ has a dense firing sequence, there must exist a round $k$ with $m < k \le t$ such that $F_k(w')=1$. If $m < k < t$, then the chip will have been fired back to $w$ on round $k$. If $k = n$, we imagine first firing the chip from $w'$ to $w$, before firing it right back in the same round. Then it is possible for $w$ to only fire chips across edges they are assigned to.
 
We now assign chips belonging to vertices in $S_T=S_0$ on round $T$. Consider a vertex $v_0 \in S_0$. Since no vertex fires twice in a row, we have $U^{T-1}\sigma(v_0)<\deg(v_0)$. Note that 
\[
U^T\sigma(v_0) = |E_{0, T-1}(v_0)| + U^{T-1}\sigma(v_0) < |E_{0, T-1}(v_0)| + \deg(v_0).
\] 

%On round $T-1$, the edges in $E_{0, k}(v_0)$ for $k \geq 2$ all have one chip assigned to them while the edges in $E_{0,1}(v_0)$ either have one or two chips assigned to them. 
On round $T$, there is one chip assigned to each edge in $E_{0,k}(v_0)$ for $2 \le k \le T-1$ and one or two chips assigned to each edge in $E_{0,1}(v_0)$. We claim that we can keep these assignments. Consider a vertex $v_m\in S_m$ neighboring $v_0$. Then on round $m$, one or two chips from $v_m$ were assigned to the edge $(v_0,v_m)$. Since chips only fire across edges they are assigned to, these chips must always be on either $v_0$ or $v_m$. By \Cref{constant_firing}, $\sum_{t=0}^{T-1} F_t(v_0)=\sum_{t=0}^{T-1} F_t(v_m)$. Since $v_0$ fired on round 0 and $v_m$ has a dense firing sequence, $\sum_{t=0}^{m-1} F_t(v_0)=1$, while $\sum_{t=0}^{m-1} F_t(v_0)=0$. Then $\sum_{t=m}^{T-1} F_t(v_0)=\sum_{t=m}^{T-1} F_t(v_m)-1$, so exactly one chip assigned to $(v_0,v_m)$ is on $v_0$ on round $T$. Thus, $v_0$ has one chip already assigned to each edge. There are now $U^T\sigma(v_0)-\deg(v_0)<|E_{0, T-1}(v_0)|$ unassigned chips remaining, which we assign arbitrarily to distinct edges in $E_{0, T-1}(v_0)$. Assigning chips for all $v_0\in S_0$ yields a valid chip assignment.
\end{proof}

In the following proofs, we will assume that a valid chip assignment given by \Cref{validassignment} has already been obtained. 

We call an edge \emph{heavy} if two chips are assigned to it. We call other edges \emph{light}. We denote the set of heavy edges in $E_{t,t'}$ as $H_{t,t'}$, and we denote the set of light edges in $E_{t,t'}$ as $L_{t,t'}$. Note that by the definition of a valid assignment, $|H_{t,t'}|=0$ unless $t'=t\pm 1$. Without loss of generality, we let $t'=t+1$. We say that a heavy edge $e=(v,u)\in H_{t,t+1}$ with $v\in S_t$ \emph{leans} $t$ if $v$ sometimes holds both chips assigned to $e$, and that it leans $t+1$ if $u$ sometimes holds both chips. We denote the set of heavy edges in $H_{t,t'}$ that lean $t$ as $H_{t,t'}^t$. We now show that $H_{t,t+1}^{t+1}=H_{t,t+1}$ and $|H_{t,t+1}^t|=0$.

\begin{lem} \label{backheavyset0}
    In a compliant parallel chip-firing game with a valid chip assignment, $H_{t,t+1}^{t+1}=H_{t,t+1}$ and $|H_{t,t+1}^t| = 0$.
\end{lem}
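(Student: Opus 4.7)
The plan is to identify the two chips of a heavy edge $e = (v, u) \in H_{t, t+1}$ (with $v \in S_t$ and $u \in S_{t+1}$) and follow where each one sits over time. From the inductive construction in \Cref{validassignment}, $e$ carries two chips: $c_v$, placed from $v$'s side when processing $v \in S_t$ (as one of the ``one chip per edge not in $E_{t-1, t}(v)$'' allocations, which draws from $v$'s originally held chips, since $(v, u) \in E_{t, t+1}(v)$), and $c_u$, placed from $u$'s side when processing $u \in S_{t+1}$ (as an overflow chip arbitrarily assigned to $(v, u) \in E_{t, t+1}(u)$, drawn from $u$'s originally held chips). In particular, at round $0$ chip $c_v$ sits on $v$ and chip $c_u$ sits on $u$.

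For the first claim $H_{t,t+1}^{t+1} = H_{t,t+1}$, neither $v$ nor $u$ fires before round $t$, so both chips remain at their original positions through round $t$. On round $t$, $v$ fires and sends $c_v$ across $e$ to $u$; at the start of round $t+1$, both chips lie on $u$. Hence $e \in H_{t, t+1}^{t+1}$, which gives $H_{t,t+1} \subseteq H_{t, t+1}^{t+1}$.

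For the second claim $H_{t, t+1}^t = \emptyset$, I track $x(r)$, the number of chips of $e$ sitting on $v$ at the start of round $r$. Each firing of $v$ sends one chip of $e$ across $e$ from $v$ to $u$, and each firing of $u$ sends one chip from $u$ to $v$, so $x(r+1) = x(r) + F_r(u) - F_r(v)$, giving $x(r) = 1 + U(r) - V(r)$ where $V(r), U(r)$ are the cumulative firing counts of $v, u$ strictly before round $r$. It therefore suffices to prove $V(r) \geq U(r)$ for every $r$. By \Cref{constant_firing}, $v$ and $u$ fire equally often per cycle; density applied to the adjacent pair $v, u$ forces at least one firing of $u$ between any two consecutive firings of $v$ and vice versa, and together with the equal counts this pins down strict cyclic alternation of firings. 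Since $v_1 = t$ precedes $u_1 = t+1$, a straightforward induction using this alternation shows $V(r) - U(r) \in \{0, 1\}$ throughout the cycle; hence $x(r) \leq 1$ always, so $v$ never holds both chips.

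The main obstacle I expect is cleanly handling rounds where $v$ and $u$ fire simultaneously: such rounds increment $V$ and $U$ together and leave their difference unchanged, so one must carefully verify that they mesh with the alternation pattern (in particular that each cyclic gap $(v_i, v_{i+1}]$ contains exactly one $u$-firing) and that the bound $V(r) \geq U(r)$ never breaks at a round where $u$ fires alone and would otherwise push $V - U$ to $-1$.
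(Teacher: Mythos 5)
Your proof is correct and follows essentially the same route as the paper: by the construction of the valid assignment, both chips of a heavy edge in $H_{t,t+1}$ sit on the $S_{t+1}$ endpoint at round $t+1$, and density forces the firings of the two endpoints to interleave so that the $S_t$ endpoint never accumulates both chips --- your cumulative-count bookkeeping $x(r)=1+U(r)-V(r)$ is just a more explicit version of the paper's ``the chip will be returned before $v$ fires again.'' The one caveat is that your justification of the initial chip positions via ``neither endpoint fires before round $t$'' does not literally apply to the wrap-around case $t=T-1$ (where the $S_0$ endpoint fires on round $0$); there the fact that both chips lie on the $S_{t+1}$ endpoint at round $t+1$ must instead be read off from the final step of the construction in \Cref{validassignment}.
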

\begin{proof}
    Consider a vertex $v \in S_{t+1}$ and a neighbor $v' \in S_{t}$ such that $(v,v')\in H_{t,t+1}$. The two chips assigned to $(v,v')$ belong to $v$ on round $t+1$. Therefore, $(v,v')$ leans $t+1$. On round $t+1$, one of the two chips is fired to $v'$. Since both $v$ and $v'$ have dense firing sequences, the chip will be returned before $v$ fires again. Thus, $(v,v')$ never leans $t$, so every edge in $H_{t,\,t+1}$ leans $t+1$ and none lean $t$.
\end{proof}

We continue by examining a specific vertex $v\in S_t$. We let $H_{t,t'}(v)$ be the set of edges in $H_{t,t'}$ containing $v$. We similarly define $L_{t,t'}(v)$. We say that $v$ is \emph{deprived} of an edge $e=(v,v')$ on round $k$ if $v$ holds none of the chips assigned to $e$ on round $k$. We now count exactly the number of edges that $v$ is deprived of on round $t-1$.

\begin{lem} \label{deprived}
    In a compliant parallel chip-firing game with a valid chip assignment, a vertex $v \in S_t$ is deprived of exactly $|L_{t-1,t}(v)|$ edges on round $t-1$. 
\end{lem}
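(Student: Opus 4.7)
The plan is to perform a case analysis on each neighbor $w$ of $v$, classified by which $S_k$ contains $w$, and to track where the chips assigned to $(v,w)$ sit at the start of round $t-1$. The central preparatory observation is that $v \in S_t$ has not fired during any of rounds $0,\dots,t-1$, so any chip that was on $v$ at round $0$ or that arrived on $v$ via a neighbor's firing in that window remains on $v$ at round $t-1$.

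First I would handle the edges $(v,w) \in E_{t-1,t}(v)$, where $w \in S_{t-1}$. If $(v,w)$ is light, its one chip is the chip $w$ fires across $(v,w)$ on round $t-1$; at the start of round $t-1$ it is still on $w$, so $v$ is deprived of $(v,w)$. If $(v,w)$ is heavy, \Cref{backheavyset0} implies the edge leans $t$, so two chips are assigned: one by $w$ (sitting on $w$ at the start of round $t-1$, about to be fired) and one by $v$ out of $v$'s unassigned pool at round $t$. Because $v$ has not fired, this second chip was on $v$ throughout rounds $0,\dots,t-1$, so $v$ holds exactly one of the two chips and is not deprived.

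Next I would rule out deprivation for every remaining edge. For $w \in S_t$ the edge lies in $e_t(v)$ and carries one chip from each endpoint; since neither $v$ nor $w$ has fired by round $t-1$, $v$ still holds its chip. For $w \in S_k$ with $k \ge t+1$, neither endpoint has fired before round $t-1$, so the assigned chip(s) are still in their initial positions, and at least one chip must begin on $v$'s side so that $v$ can fire it across $(v,w)$ on round $t$. For $w \in S_k$ with $k < t-1$, density forces $w$ to fire exactly once during $[0,t-1]$, namely on round $k$: a second fire on some $k' < t$ would, by density of $w$'s firing sequence, require $v$ to fire in $(k,k']\subseteq[0,t-1]$, contradicting $v\in S_t$. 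Hence the single chip on this light edge was fired from $w$ to $v$ on round $k$ and has remained on $v$ since. Summing across all of $v$'s neighbors leaves exactly $|L_{t-1,t}(v)|$ edges on which $v$ is deprived.

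The main obstacle is the chip bookkeeping for the heavy edges in $E_{t-1,t}(v)$: identifying which of the two assigned chips came from which endpoint and showing that the $v$-side chip was genuinely on $v$ at round $t-1$. \Cref{backheavyset0} and the structure of the assignment built in \Cref{validassignment} do most of the work here, while density of firing sequences handles the long-range cases $k<t-1$ by forbidding multiple early firings of $w$.
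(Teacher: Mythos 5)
Your proposal is correct and follows essentially the same route as the paper: show the single chip on each light edge of $E_{t-1,t}(v)$ sits on the $S_{t-1}$ endpoint at round $t-1$, then rule out deprivation case-by-case for heavy edges (via \Cref{backheavyset0}), edges in $e_t(v)$, edges to later-firing neighbors, and edges to neighbors in $S_k$ with $k\le t-2$ using the fact that $v$ has not yet fired. The only cosmetic difference is that you track the provenance of the two chips on a heavy edge directly, where the paper simply invokes the ``does not lean $t-1$'' conclusion of \Cref{backheavyset0}.
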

\begin{proof}
    We first show that $v$ is deprived of all edges in $L_{t-1,t}(v)$ on round $t-1$. Consider a neighboring vertex $v' \in S_{t-1}$ such that $(v,v')\in L_{t-1,t}(v)$. Since $F_{t-1}(v')=1$, it must hold a chip assigned to each of its edges. Since $(v,v')$ is light, only one chip is assigned to it. This chip must belong to $v'$ on round $t-1$, so $v$ is deprived of all  edges in $L_{t-1,t}(v)$. 

    We next show that $v$ is not deprived of any other edges on round $t-1$. Consider an edge $h\in H_{t-1,t}(v)$. By \Cref{backheavyset0}, $h$ does not lean $t-1$, so at least one chip assigned to $h$ always belongs to $v$. Therefore, $v$ is never deprived of any edges in $H_{t-1,t}(v)$.

    Now consider a vertex $v_k\in S_k$ with $0\leq k\leq t-2$ in the neighborhood of $v$. Then $v_k$ fired a chip $c$ assigned to $(v,v_k)$ to $v$ on round $k$. Since $v\in S_t$, it did not fire on any round before $t$. Thus, $c$ must belong to $v$ on round $t-1$, so $v$ is not deprived of any edges in $E_{t}(v)$ on round $t-1$. 

    Next, consider an edge $e\in e_t(v)$. By the rules of a valid chip assignment, $v$ must always hold one of the chips assigned to $e$, so $v$ is never deprived of any edges in $e_t(v)$.
    
    Finally, consider a vertex $v_j\in S_j$ with $j\geq t+1$ in the neighborhood of $v$. Since $v$ fires before $v_j$, it must be the case that $v$ holds one of the chips assigned to $(v,v_j)$, so $v$ is not deprived of any edges in $E_{t,j}(v)$ for all $j\geq t+1$ on round $t-1$. Thus, $v$ is only deprived of the edges in $L_{t-1,t}(v)$ on round $t-1$.
\end{proof}

We now bound $|L_{t-1,t}(v)|$ using \Cref{deprived}.

\begin{lem} \label{lightedgebound}
    In a compliant parallel chip-firing game with a valid chip assignment, $|L_{t-1,t}(v)| \ge 1$ for any vertex $v \in S_{t}$.
\end{lem}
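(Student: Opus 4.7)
The plan is to use \Cref{deprived} together with a one-line chip-count argument: if $|L_{t-1,t}(v)| = 0$, then $v$ must hold at least $\deg(v)$ chips on round $t-1$ and therefore fire on that round, which cannot happen since $v \in S_t$ already fires on round $t$ and compliance forbids firing twice in a row.

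In more detail, I would assume for contradiction that $|L_{t-1,t}(v)| = 0$ and invoke \Cref{deprived} to conclude that $v$ is deprived of no edges on round $t-1$; that is, for every edge $e$ incident to $v$, the vertex $v$ holds at least one of the chips assigned to $e$ on round $t-1$. Because the valid chip assignment pairs each chip on $v$ with exactly one edge incident to $v$, summing over the $\deg(v)$ incident edges gives
\[
U^{t-1}\sigma(v) \ \geq \ \deg(v),
\]
so by the firing rule $v$ fires on round $t-1$. However, $v \in S_t$ so $F_t(v) = 1$, and the no-firing-twice-in-a-row hypothesis of compliance (applied cyclically when $t=0$) forbids $F_{t-1}(v) = F_t(v) = 1$. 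This contradiction forces $|L_{t-1,t}(v)| \geq 1$.

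I do not anticipate any real obstacle: once \Cref{deprived} is in hand, the whole argument reduces to a single inequality. The only point worth noting is the cyclic wrap-around case $t=0$, where the contradiction is obtained from the compliance hypothesis itself (the firing sequence of $v$ has no cyclic $11$ substring) rather than from the definition of $S_t$.
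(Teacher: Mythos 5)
Your proof is correct and follows essentially the same route as the paper's: both combine \Cref{deprived} with the fact that $F_{t-1}(v)=0$ forces $U^{t-1}\sigma(v)<\deg(v)$, you phrasing it as a contradiction and the paper as a direct count of the edges for which $v$ holds $0$, $1$, or $2$ assigned chips. Your remark about the cyclic case $t=0$ matches what the paper leaves implicit.
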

\begin{proof}
    Note that since $F_{t-1}(v)=0$, we have $U^{t-1}\sigma(v)<\deg(v)$. Since $v$ holds 0, 1, or 2 chips assigned to each of its edges, on round $t-1$, it must be true that $v$ has strictly more edges for which it holds 0 chips than edges for which it holds 2 chips. By Lemma \ref{deprived}, there are $|L_{t-1,t}(v)|$ edges for which $v$ holds 0 chips on round $t-1$. Since the number of edges for which $v$ holds 2 chips is nonnegative, we must have that $|L_{t-1,t}(v)| \geq 1$.
\end{proof}

We are now ready to prove \Cref{theorem1}.

\begin{proof}[Proof of Theorem 1]
    By \Cref{non-clumpy}, in a parallel chip-firing game with $T\geq 3$, either no vertex fires twice in a row or no vertex waits twice in a row. Suppose $G$ is a game with chip configuration $\sigma$ and $T=3$ or 4 in which no vertex fires twice in a row. Then all vertices must have dense firing sequences and $G$ is compliant.
    
    By Lemma \ref{validassignment} and Lemma \ref{lightedgebound}, we have $|L_{t-1,t}(v)| \ge 1$ for all $v \in S_{t}$. Summing over all $v\in V$, we have that the total number of light edges $L$ satisfies
    \[
    L \ge \sum_{t=0}^{T-1}\sum_{v \in S_t}L_{t-1,t}(v) \ge |V|.
    \] 
    
    Since no edge has more than two chips assigned, $|\sigma|\leq 2|E|-|V|$. 
    
    Now suppose $G$ is a game in which no vertex waits twice in a row. Then we consider the complement game of $G$, which has the same period. Since vertices wait in the complement game when they fire in the original, no vertex fires twice in a row in the complement game, and we see that $|\sigma_c|\leq 2|E|-|V|$. Therefore, $|\sigma|\geq 2|E|$. Thus, any parallel chip-firing game with $T=3$ or 4 has $|\sigma| \leq 2|E|-|V|$ or $|\sigma|\geq 2|E|$.
\end{proof}

\subsection{Complete Bipartite Graphs $K_{a,a}$}

We introduce the notion of conjugate configurations, adapted from Levine \cite{levine}. Let the partitions of $K_{a,a}$ be $L$ and $R$. Let the vertices in $L$ be $L_1, \dots, L_a$ with $\sigma(L_i)\geq\sigma(L_{i+1})$ for $1\leq i\leq a-1$. Similarly define $R_1,\dots, R_a$. In this section, we prove results for vertices in $L$. The analogous results hold for vertices in $R$.

We define $\ell_L(\sigma) = \min(\sigma(L_1),\sigma(L_2),\dots, \sigma(L_a))$ as the minimum number of chips on a vertex $L_i$ in $\sigma$ and $r_L(\sigma) = |\{L_i \in L \colon \sigma(L_i) \geq a\}|$ as the number of vertices $L_i$ that fire in $\sigma$. We define $\ell_R$ and $r_R$ analogously. 

The \emph{$j$th conjugate configuration} $c^j\sigma$ of an initial position $\sigma$, for $1\leq j\leq a$, is defined by the following relation for vertices in $L$.
\begin{equation*}
        \begin{aligned}
        &c^j\sigma(L_i) = \sigma(L_i) + j - a ~\text{for}~ i \leq j\\
        &c^j\sigma(L_i) = \sigma(L_i) + j ~\text{for}~ i > j
    \end{aligned}
\end{equation*}

The analogous definitions hold for vertices in $R$. 

Again adopting notation from Levine \cite{levine}, we let $u_t(\sigma,v) = \Sigma_{k=0}^{t-1} F_k(v)$ be the number of times  $v$ fires in the first $t$ rounds from $\sigma$. We let $\alpha_{t}(L)=\sum_{i=1}^au_t(\sigma,\,L_i)$ and $\alpha_{t,\,j}(L)=\sum_{i=1}^au_t(c^j\sigma,\,L_i)$. We also define the analogous values for $R$. Finally, we define the \textit{activity} of a parallel chip-firing game as the function \[A(\sigma) = \lim_{t \rightarrow \infty} \frac{\alpha_t(L) + \alpha_t(R)}{2at}.\] 

A special property holds if $0<A(\sigma)<1$.

\begin{lem} \label{lemma: confined}
    If $0<A(\sigma)<1$, then, for all $t\geq 0$, $$\max(U^t\sigma(L_1),\dots,U^t\sigma(L_a)) - \min(U^t\sigma(L_1),\dots,U^t\sigma(L_a)) < a.$$ 
\end{lem}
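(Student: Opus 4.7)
The plan is to show that a gap of size at least $a$ on $L$ forces every vertex to fire on every round, i.e.\ $A(\sigma) = 1$, contradicting the hypothesis. Suppose for contradiction that at some round $t_0 \geq 0$ we have $\max_i U^{t_0}\sigma(L_i) - \min_i U^{t_0}\sigma(L_i) \geq a$, realized by $L_p, L_q \in L$. Since the game is $T$-periodic from round $0$, I may take $U^{t_0}\sigma$ as the initial configuration (with the same period and activity), so without loss of generality $t_0 = 0$. Set $\delta_t = U^t\sigma(L_p) - U^t\sigma(L_q)$, so $\delta_0 \geq a$.

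Next, I would derive a one-step recurrence for $\delta_t$. Both $L_p$ and $L_q$ lie in $L$, so they have degree $a$ and, on every round, each receives one chip from each vertex of $R$ that fires. All common terms cancel, giving
\[\delta_{t+1} \;=\; \delta_t - a\bigl(F_t(L_p) - F_t(L_q)\bigr).\]
The key monotonicity is: when $\delta_t \geq 0$, $L_p$ has at least as many chips as $L_q$, so $F_t(L_q) = 1 \Rightarrow F_t(L_p) = 1$; hence $F_t(L_p) \geq F_t(L_q)$ and $\delta_{t+1} \in \{\delta_t,\, \delta_t - a\}$. Symmetrically, $\delta_t \leq 0$ gives $\delta_{t+1} \in \{\delta_t,\, \delta_t + a\}$. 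A brief case check then shows the half-open interval $[-a, a)$ is forward-invariant: from $\delta_t \in [0, a)$ the update lies in $\{\delta_t, \delta_t - a\} \subseteq [-a, a)$, and from $\delta_t \in [-a, 0)$ it lies in $\{\delta_t, \delta_t + a\} \subseteq [-a, a)$.

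Since $\delta_0 \geq a$ and the game is $T$-periodic, $\delta_T = \delta_0 \geq a$. So the orbit of $\delta$ cannot enter the trap $[-a, a)$ at any $t \in \{0, 1, \ldots, T-1\}$, and hence $\delta_t \geq a$ for every $t$. In particular $F_t(L_p) \geq F_t(L_q)$ pointwise; combined with \Cref{constant_firing}, which forces $\sum_{t=0}^{T-1} F_t(L_p) = \sum_{t=0}^{T-1} F_t(L_q)$, this inequality must be equality, giving $F_t(L_p) = F_t(L_q)$ for all $t$. If $L_q$ ever waited at some round $t$, then $U^t\sigma(L_q) < a$ and $U^t\sigma(L_p) \geq U^t\sigma(L_q) + a \geq a$, so $L_p$ fires, contradicting $F_t(L_p) = F_t(L_q)$. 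Hence $L_q$ fires on every round, and by \Cref{constant_firing} every vertex in $V$ fires on every round, giving $A(\sigma) = 1$ and contradicting $A(\sigma) < 1$.

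The main obstacle will be pinning down the forward-invariance of $[-a, a)$: one must verify that an ``upward'' jump $F_t(L_p) < F_t(L_q)$ can only happen when $\delta_t < 0$, so that a $\delta$-orbit starting at $\delta_0 \geq a$ cannot first leak into $[0, a)$ and then bounce back above $a$. This is precisely where the bipartite structure is used (equal degrees on each side and a common incoming chip count); the rest is bookkeeping against periodicity.
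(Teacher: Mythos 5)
Your proof is correct, and it takes a genuinely different route from the paper's. The paper argues directly and uniformly over all of $L$: since $0<A(\sigma)<1$ forces $T\neq 1$, no vertex is ever abundant, so $U^t\sigma(L_i)\leq 2a-1$; combining this with the fact that every $L_i$ gains exactly $r_R(U^t\sigma)$ chips in round $t$ yields the two-sided sandwich $r_R(U^t\sigma)\leq U^{t+1}\sigma(L_i)\leq a-1+r_R(U^t\sigma)$ for every $i$ simultaneously, which bounds the spread at all rounds $t\geq 1$ in one stroke, and periodicity ($U^T\sigma=\sigma$) transfers the bound to round $0$. You instead run a contradiction argument on the pairwise difference $\delta_t=U^t\sigma(L_p)-U^t\sigma(L_q)$, using the recurrence $\delta_{t+1}=\delta_t-a\bigl(F_t(L_p)-F_t(L_q)\bigr)$, the forward-invariance of $[-a,a)$, and periodicity to lock in $\delta_t\geq a$ for all $t$; then \Cref{constant_firing} upgrades the pointwise inequality $F_t(L_p)\geq F_t(L_q)$ to equality, forcing every vertex to fire every round and $A(\sigma)=1$. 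The step you flag as the main obstacle (that the orbit cannot leak into $[0,a)$ and bounce back above $a$) is handled correctly by your trap argument together with $\delta_{nT}=\delta_0$. Your route avoids the abundant-vertex bound entirely and in fact uses only the hypothesis $A(\sigma)<1$, at the cost of invoking \Cref{constant_firing}; the paper's route is shorter and additionally pins down where the window of width $a-1$ sits (anchored at $r_R(U^t\sigma)$), though only the spread bound itself is what gets reused in the proof of \Cref{theorem2}.
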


\begin{proof}
    Consider a vertex $L_i$. Since $0<A(\sigma)<1$, we must have $T \neq 1$. Thus, $U^t\sigma(L_i) \leq 2a-1$ for all $t \geq 0$. We claim that for any $t \geq 0$, it holds that $r_R(U^t\sigma) \leq U^{t+1}\sigma(L_i) \leq a-1+r_R(U^t\sigma)$. %We proceed by induction on $t$.
    
    Since $L_i$ receives $1$ chip from each of the vertices in $R$ that fire, we have that $r_R(U^{t}\sigma) \leq U^{t+1}\sigma(L_i)$ for all $t$. If $F_{t}(L_i)=0$, we also have that $U^{t+1}\sigma(L_i)=U^{t}\sigma(L_i) + r_R(U^{t}\sigma) \leq a-1+r_R(U^{t}\sigma)$. If $F_{t}(L_i)=1$, we instead have $U^{t+1}\sigma(L_i) = U^{t}\sigma(L_i)-a+r_R(U^{t}\sigma) \leq 2a-1-a+r_R(U^{t}\sigma) = a-1+r_R(U^{t}\sigma)$, yielding the same result.
    %Since $L_i$ receives $1$ chip from each of the vertices in $R$ that fire, $r_R(\sigma) \leq U\sigma(L_i)$. If $F_0(L_i)=0$, we also have that $U\sigma(L_i) \leq \sigma(L_i) + r_R(\sigma) \leq a-1+r_R(\sigma)$. If $F_0(L_i)=1$, we instead have $U\sigma(L_i) \leq \sigma(L_i)-a+r_R(\sigma) \leq 2a-1-a+r_R(\sigma) = a-1+r_R(\sigma)$. The base case is finished.

    %We assume as our inductive hypothesis that $r_R(U^{t-1}\sigma) \leq U^{t}\sigma(L_i) \leq a-1+r_R(U^{t-1}\sigma)$. We again have $r_R(U^{t-1}\sigma) \leq U^{t}\sigma(L_i)$. If $F_{t-1}(L_i)=0$, we also have that $U^{t+1}\sigma(L_i) \leq U^{t}\sigma(L_i) + r_R(U^{t}\sigma) \leq a-1+r_R(U^{t}\sigma)$. If $F_{t-1}(L_i)=1$, we instead have $U^{t+1}\sigma(L_i) \leq U^{t-1}\sigma(L_i)-a+r_R(U^{t}\sigma) \leq 2a-1-a+r_R(U^{t}\sigma) = a-1+r_R(U^{t}\sigma)$.

    %Our inductive step is completed, proving the claim.

    Since $U^T\sigma=\sigma$ and $r_R(U^t\sigma) \leq U^{t+1}\sigma(L_i) \leq a-1+r_R(U^t\sigma)$ for all $L_i$ and $t\geq 1$, we must have for all $t\geq 0$ that $$\max(U^t\sigma(L_1),\dots,U^t\sigma(L_a)) - \min(U^t\sigma(L_1),\dots,U^t\sigma(L_a)) < a.\eqno\qed\phantom\qedhere$$ 
\end{proof}

Let $z_t^j(L_i) = u_t(c^j\sigma,L_i) - u_t(\sigma,L_i)$ for all $1\leq i,j\leq a$. Define $z_t^j(R_i)$ analogously. 
\begin{comment}
    The values $u_t(\sigma,L_i)$ and $u_t(c^j\sigma,L_i)$ are related by the following lemma.
\end{comment}

\begin{lem} \label{lemma:conj config u}
\begin{comment}
    For $1\leq i \leq j\leq a$, we have
\[
u_t(\sigma,L_i)-1 \le u_t(c^j\sigma, L_i) \le u_t(\sigma, L_i)
\]
For $1\leq j < i \leq a$, we have
\[
u_t(\sigma,L_i) \le u_t(c^j\sigma, L_i) \le u_t(\sigma, L_i) + 1
\]
\end{comment}    
    For all $t \geq 1$ and $i,j$ with $1\leq i,j\leq a$, we have
\begin{equation*} \label{eq: conj config u ineq}
    \begin{aligned}
    - 1 \leq z_t^j(L_i) \leq 0 ~&\text{if}~ \,i \leq j\text{, and}\\
    0 \leq z_t^j(L_i) \leq 1 ~&\text{if}~ \,i > j.\\
    \end{aligned}
\end{equation*}

The analogous results hold for vertices $R_i$.
\end{lem}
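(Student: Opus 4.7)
The plan is to prove the stated bounds simultaneously for both $L$ and $R$ by induction on $t$. The key identity is that each time an $R$-vertex fires, every $L_i$ receives exactly one chip, which gives
\[
U^t\sigma(L_i) = \sigma(L_i) + \rho_t(\sigma) - a\,u_t(\sigma, L_i),
\]
where $\rho_t(\sigma) := \sum_{k=1}^a u_t(\sigma, R_k)$. Subtracting the analogous identity for $c^j\sigma$ yields
\[
U^t c^j\sigma(L_i) - U^t\sigma(L_i) = \epsilon_{i,j} + \delta_t - a\,z_t^j(L_i),
\]
where $\epsilon_{i,j} := c^j\sigma(L_i) - \sigma(L_i)$ equals $j-a$ for $i \leq j$ and $j$ for $i > j$, and $\delta_t := \rho_t(c^j\sigma) - \rho_t(\sigma) = \sum_{k=1}^a z_t^j(R_k)$. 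The mirror identity holds for the $R$-side with the roles of $L$ and $R$ swapped.

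The base case $t = 0$ is trivial since $z_0^j \equiv 0$. For the inductive step, I assume the claimed bounds hold at time $t$ for both $L$ and $R$; in particular, the $R$-bounds yield $\delta_t \in [-j, a-j]$. I then check four cases based on whether $i \leq j$ and the current value of $z_t^j(L_i)$. For instance, if $i \leq j$ and $z_t^j(L_i) = 0$, the identity gives $U^t c^j\sigma(L_i) - U^t\sigma(L_i) = (j-a) + \delta_t \leq 0$, so any firing at $L_i$ from $c^j\sigma$ also occurs from $\sigma$; the firing indicator in the conjugate game is at most that in the original, keeping $z_{t+1}^j(L_i) \in \{-1, 0\}$. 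When $i \leq j$ and $z_t^j(L_i) = -1$, the identity gives $j + \delta_t \geq 0$, so firing from $\sigma$ forces firing from $c^j\sigma$, again keeping $z_{t+1}^j(L_i) \in \{-1, 0\}$. The two cases $i > j$ are symmetric, driven by $j + \delta_t \geq 0$ when $z_t^j(L_i) = 0$ and $(j-a) + \delta_t \leq 0$ when $z_t^j(L_i) = 1$. The $R$-bounds at time $t+1$ follow by the mirrored argument.

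The main obstacle is recognizing that the induction must be genuinely simultaneous: advancing the $L$-bounds from $t$ to $t+1$ uses the $R$-bounds at time $t$ through $\delta_t$, and vice versa. Once this is set up, the bounds $\delta_t \in [-j, a-j]$ are exactly tight enough to force the required signs $(j-a) + \delta_t \leq 0$ and $j + \delta_t \geq 0$, and this precise cancellation between the conjugate shift $\epsilon_{i,j}$ and the cumulative firing discrepancy $\delta_t$ is what makes the conjugate construction work. All remaining steps are routine bookkeeping.
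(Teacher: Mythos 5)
Your proposal is correct and follows essentially the same route as the paper: the same identity expressing $U^tc^j\sigma(L_i)-U^t\sigma(L_i)$ in terms of the conjugate shift and the cumulative firing discrepancy $\sum_k z_t^j(R_k)\in[-j,a-j]$, the same simultaneous induction over $L$ and $R$, and the same four-case analysis. The only cosmetic difference is that you start the induction at $t=0$ with $z_0^j\equiv 0$ rather than verifying $t=1$ directly, which is a harmless (arguably cleaner) variant.
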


\begin{proof}
We proceed by induction on $t$. We take $t=1$ to be our base case. Note that $c^j\sigma(L_i) \leq \sigma(L_i)$ if $i\leq j$, so if $L_i$ fires in $c^j\sigma$ on turn 0, it also fires in $\sigma$ on turn 0. Thus, $-1 \leq z_1^j(L_i) \leq 0$. Similarly, $c^j\sigma(L_i) \geq \sigma(L_i)$ if $i>j$, so if $L_i$ fires in $\sigma$ on turn 0, it also fires in $c^j\sigma$. Thus, $0 \leq z_1^j(L_i) \leq 1$. 

We assume as our inductive hypothesis that the statement is true for all $1\leq i,j\leq a$ and some particular value of $t$. Note that, for all $1\leq i,j\leq a$,
\begin{align*}
    U^tc^j\sigma(L_i)-U^t\sigma(L_i) &= (c^j\sigma(L_i) + 
    \alpha_{t,j}(R) - au_t(c^j\sigma,L_i))\\
    &\qquad - (\sigma(L_i) + \alpha_t(R) - au_t(\sigma,L_i))\\
    &= c^j\sigma(L_i) - \sigma(L_i) + \alpha_{t,j}(R) - \alpha_t(R) - az_t^j(L_i).
\end{align*}

From the definition of $\alpha_{t,\,j}(R)$,
\begin{align*}
    \alpha_{t,j}(R) - \alpha_t(R) &= \sum_{i=1}^a [u_t(c^j\sigma, R_i) - u_t(\sigma, R_i)]=\sum_{i=1}^a z_t^j(R_i). 
\end{align*}

By the inductive hypothesis, \[-j \leq \alpha_{t,j}(R) - \alpha_t(R) \leq a-j.\]

There are now four cases.

\underline{Case 1:} $i \le j$ and $z_t^j(L_i) = 0$.

We must have that
\[U^tc^j\sigma(v)-U^t\sigma(v) = j-a + \alpha_{t,j}(R) - \alpha_t(R) \leq 0.\]

Therefore, if $L_i$ fires in $c^j\sigma$ on turn $t$, it also fires in $\sigma$ on turn $t$. Thus, $-1 \leq z_{t+1}^j(L_i)-z_t^j(L_i) \leq 0$, so $-1 \leq z_{t+1}^j(L_i) \leq 0$.

\underline{Case 2:} $i \le j$ and $z_t^j(L_i) = -1$.

We must have that
\[U^tc^j\sigma(L_i)-U^t\sigma(L_i) = j-a + \alpha_{t,j}(R) - \alpha_t(R) + a = j + \alpha_{t,j}(R) - \alpha_t(R) \geq 0.\]

Therefore, if $L_i$ fires in $\sigma$ on turn $t$, it also fires in $c^j\sigma$ on turn $t$. Thus, $0 \leq z_{t+1}^j(L_i)-z_t^j(L_i) \leq 1$, so $-1 \leq z_{t+1}^j(L_i) \leq 0$.

\underline{Case 3:} $i > j$ and $z_t^j(L_i) = 0$.

We must have that
\[U^tc^j\sigma(L_i)-U^t\sigma(L_i) = j + \alpha_{t,j}(R) - \alpha_t(R) \geq 0.\]

Therefore, if $L_i$ fires in $\sigma$ on turn $t$, it also fires in $c^j\sigma$ on turn $t$. Thus, $0 \leq z_{t+1}^j(L_i)-z_t^j(L_i) \leq 1$, so $0 \leq z_{t+1}^j(L_i) \leq 1$.

\underline{Case 4:} $i > j$ and $z_t^j(L_i) = 1$. 

We must have that
\[U^tc^j\sigma(L_i)-U^t\sigma(L_i) = j + \alpha_{t,j}(R) - \alpha_t(R) - a \leq 0.\]

Therefore, if $L_i$ fires in $c^j\sigma$ on turn $t$, it also fires in $\sigma$ on turn $t$. Thus, $-1 \leq z_{t+1}^j(L_i)-z_t^j(L_i) \leq 0$, so $0 \leq z_{t+1}^j(L_i) \leq 1$.

The inductive step holds in all cases, so the proof is complete.
\end{proof}

From \Cref{lemma:conj config u} we see the following corollary.

\begin{cor}\label{cor: conj config a equal}
    For all $j$ with $1\leq j\leq a$, it holds that $A(\sigma)=A(c^j\sigma)$.
\end{cor}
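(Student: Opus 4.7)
The plan is to derive the corollary directly from \Cref{lemma:conj config u} by showing that $\alpha_{t,j}(L)+\alpha_{t,j}(R)$ and $\alpha_t(L)+\alpha_t(R)$ differ by at most a bounded constant independent of $t$, so that dividing by $2at$ and taking $t\to\infty$ forces the two limits to coincide.

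First I would observe that, by the definition of $z_t^j$, \Cref{lemma:conj config u} gives $|z_t^j(L_i)|\le 1$ for every $1\le i\le a$ and every $t\ge 1$, and the analogous statement for vertices $R_i$. Summing over $i$ yields
\begin{equation*}
|\alpha_{t,j}(L)-\alpha_t(L)| = \Bigl|\sum_{i=1}^a z_t^j(L_i)\Bigr|\le a,\qquad |\alpha_{t,j}(R)-\alpha_t(R)|\le a.
\end{equation*}
Adding these and applying the triangle inequality gives $|\alpha_{t,j}(L)+\alpha_{t,j}(R)-\alpha_t(L)-\alpha_t(R)|\le 2a$ for all $t\ge 1$.

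Next, I would divide by $2at$ to obtain
\begin{equation*}
\left|\frac{\alpha_{t,j}(L)+\alpha_{t,j}(R)}{2at}-\frac{\alpha_t(L)+\alpha_t(R)}{2at}\right|\le\frac{1}{t},
\end{equation*}
which tends to $0$ as $t\to\infty$. Since the limit defining $A(\sigma)$ exists (as stated in the paper's setup), the limit defining $A(c^j\sigma)$ exists and equals $A(\sigma)$, completing the proof.

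There is no real obstacle here; the corollary is essentially a quantitative restatement of \Cref{lemma:conj config u}. The only thing to be careful about is that the bound $|z_t^j|\le 1$ holds uniformly in $t$, which is exactly what \Cref{lemma:conj config u} provides, and that the existence of the limit $A(c^j\sigma)$ is automatic once we know it differs from $A(\sigma)$ by a vanishing term.
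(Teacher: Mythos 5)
Your proof is correct and matches the paper's intent exactly: the paper gives no explicit argument, simply asserting that the corollary follows from \Cref{lemma:conj config u}, and your filling-in (uniform bound $|z_t^j|\le 1$, hence $|\alpha_{t,j}-\alpha_t|\le a$ on each side, divide by $2at$ and let $t\to\infty$) is the intended deduction.
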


The following lemma relates the round-dependent function $u_{2t}(\sigma,v)$ to the round-independent function $u_2(\sigma,v)$.

\begin{lem} \label{lemma: u2 and u2t}
    If $u_2(\sigma,L_i) \ge 1$ for all $i$ with  $1\leq i \leq a$, then $u_{2t}(\sigma,L_i) \ge t$ for all $i$ and $t \ge 1$. The analogous results hold for vertices $R_i$.
\end{lem}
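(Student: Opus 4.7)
The plan is to induct on $t$. The base case $t=1$ is the hypothesis. For the inductive step I isolate a \emph{propagation claim}: if $u_2(\sigma,L_i)\geq 1$ for all $i$, then $u_2(U^2\sigma,L_i)\geq 1$ for all $i$. Iterating the claim along $\sigma,\,U^2\sigma,\dots,U^{2(t-1)}\sigma$ contributes at least one firing of each $L_i$ to every one of the $t$ disjoint 2-round blocks $[0,2),[2,4),\dots,[2(t-1),2t)$, giving $u_{2t}(\sigma,L_i)\geq t$.

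To prove the propagation claim I argue by contradiction: suppose some $L_{i_0}$ has $u_2(\sigma,L_{i_0})\geq 1$ yet waits in both rounds 2 and 3. Its firing pattern over rounds 0--3 is then one of $1100$, $1000$, or $0100$. The $1100$ pattern is clumpy (containing both $11$ and $00$) and is immediately ruled out by \Cref{non-clumpy}. Since rounds 2--3 contribute a $00$, the remaining two patterns trigger \Cref{non-clumpy} to forbid any $11$ in $L_{i_0}$'s whole firing sequence, so its firings are isolated.

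For the $1000$ case, the conditions $\sigma(L_{i_0})\geq a$ and $U^k\sigma(L_{i_0})<a$ for $k=1,2,3$ telescope to
\[
r_R(\sigma)+r_R(U\sigma)+r_R(U^2\sigma)<a,
\]
and the $0100$ case yields an analogous strict bound on an $R$-firing sum. The hypothesis, applied to every $L_j$, gives the opposing inequality: collectively the $a$ vertices of $L$ fire at least $a$ times in rounds 0--1, sending at least $a^2$ chips into $R$. Because $0<A(\sigma)<1$ in any non-stable game satisfying the hypothesis (the case $A=1$ forces $u_{2t}(\sigma,L_i)=2t$ trivially, and $A=0$ contradicts the hypothesis), \Cref{lemma: confined} applies to the $R$-side and keeps the $R$-chip counts clustered in an interval of length less than $a$. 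The large influx from $L$ then pushes enough $R_j$'s past the firing threshold $a$ in rounds 0--2 to force $r_R(\sigma)+r_R(U\sigma)+r_R(U^2\sigma)\geq a$, contradicting the telescoped bound.

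The main obstacle is executing the chip-flow accounting in the previous paragraph precisely enough to produce the contradiction in both the $1000$ and $0100$ subcases. If the direct count drawn from \Cref{lemma: confined} turns out to be too coarse, a backup is to invoke \Cref{constant_firing}: the isolated-$1$'s structure of $L_{i_0}$'s firing sequence forces every $L_j$ to share the same per-period firing count as $L_{i_0}$, which tightly restricts the collective pattern of firings on $L$ during rounds 0--3 and lets me reach a contradiction by reapplying \Cref{non-clumpy} to a different $L$-vertex whose firing sequence would then be forced to be clumpy.
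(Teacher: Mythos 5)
Your high-level plan (induction via a two-round propagation claim) is a legitimate alternative to the paper's argument, and the propagation claim itself is true; but the two tools you lean on to prove it do not work here, and the step you flag as the ``main obstacle'' is exactly the step that is missing. First, \Cref{non-clumpy} constrains only the \emph{periodic} firing sequence $F_0(v)\dots F_{T-1}(v)$, where round $0$ is by convention the start of the period. This lemma, however, must hold for an arbitrary starting configuration: in the proof of \Cref{theorem2} it is applied to the conjugates $c^j\sigma$, which in general lie in the transient of their own trajectories. In a transient the pattern $1100$ genuinely occurs (on $K_{3,3}$, put $6$ chips on $L_1$ and $0$ elsewhere: $L_1$ fires on rounds $0$ and $1$ and then the game freezes), so you cannot discard $1100$, nor forbid later $11$'s in the $1000/0100$ cases, by non-clumpiness. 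Your backup plan has the same defect. Second, \Cref{lemma: confined} requires $0<A(\sigma)<1$, which you justify essentially by assuming the conclusion of the lemma (that the hypothesis forces positive activity), and its proof also presumes $U^T\sigma=\sigma$; in any case it is not the right instrument for the count you need.

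The accounting you leave unexecuted is in fact elementary and handles all three patterns at once, with no appeal to \Cref{non-clumpy} or \Cref{lemma: confined}. From $u_2(\sigma,L_i)\ge 1$ for all $i$ you get $r_L(\sigma)+r_L(U\sigma)\ge a$, and then \emph{every} $R_j$ fires on round $1$ or round $2$: if $R_j$ fires on neither round $0$ nor $1$ then $U^2\sigma(R_j)=\sigma(R_j)+r_L(\sigma)+r_L(U\sigma)\ge a$, and if $R_j$ fires on round $0$ but not round $1$ then $U^2\sigma(R_j)=\sigma(R_j)-a+r_L(\sigma)+r_L(U\sigma)\ge\sigma(R_j)\ge a$. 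Hence $r_R(U\sigma)+r_R(U^2\sigma)\ge a$, and any $L_{i_0}$ that last fired on round $k\in\{0,1\}$ and then waited through round $2$ satisfies $U^3\sigma(L_{i_0})\ge U^k\sigma(L_{i_0})-a+r_R(U\sigma)+r_R(U^2\sigma)\ge a$, so it fires by round $3$ --- contradicting all of $1100$, $1000$, and $0100$. With that substitution your induction closes. The paper instead avoids the round-by-round case analysis entirely, inducting on the cumulative counts via the identity $U^{2t}\sigma(L_i)=\sigma(L_i)+\alpha_{2t}(R)-a\,u_{2t}(\sigma,L_i)$ and splitting only on whether $\sigma(L_i)\ge a$ or $U\sigma(L_i)\ge a$; as written, your proposal does not yet constitute a proof.
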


\begin{proof}
We proceed by induction on $t$. The base case is given, and we take as our inductive hypothesis that $u_{2t}(\sigma,L_i)\geq t$ and $u_{2t}(\sigma,R_i)\geq t$ for all $i$. Consider a vertex $L_i$. Since $u_{2t+2}(\sigma, L_i) \geq u_{2t}(\sigma, L_i)$, we assume that $u_{2t}(\sigma, L_i)<t+1$. Otherwise, the inductive step is automatically complete. Also, since $u_2(\sigma,L_i) \ge 1$, we must have $\sigma(L_i) \ge a$ or $U\sigma(L_i) \ge a$. 

\underline{Case 1:} $\sigma(L_i) \ge a$.

Since $\alpha_{2t}(R)= \sum_{i=1}^a u_{2t}(\sigma,R_i) \geq at$, we have that
\[U^{2t}\sigma(L_i) = \sigma(L_i) + \alpha_{2t}(R) - au_{2t}(\sigma,L_i) \geq \sigma(L_i)\geq a.\] 

Therefore, $U^{2t}\sigma(L_i) \ge a$, so $F_{2t}(L_i)=1$. Thus, $u_{2t+1}(\sigma,L_i)\geq t+1$.

\underline{Case 2:} $U\sigma(L_i) \ge a$ and $\sigma(L_i) < a$.

Suppose a vertex $R_i$ satisfies $F_0(R_i)=1$. Then by Case 1, we have $u_{2t+1}(\sigma,R_i)\geq t+1$. All other vertices $R_j$ obey $u_{2t+1}(\sigma,R_j)\geq t$ by the inductive hypothesis. The number of vertices in $R$ that fired on round 0 is given by $\alpha_0(R)$, so we have $$\alpha_{2t+1}(R) \geq \alpha_0(R)(t+1)+(a-\alpha_0(R))(t)=\alpha_0(R)+at.$$

Now suppose $F_{2t}(L_i)=0$. Then $u_{2t+1}(\sigma,L_i)=u_{2t}(\sigma,L_i)=t$. Thus, $$U^{2t+1}\sigma(L_i)-U\sigma(L_i)=\alpha_{2t+1}(R)-\alpha_0(R)-at \geq 0.$$ 

Therefore, $U^{2t+1}\sigma(L_i)\geq U\sigma(L_i)\geq a$, so $F_{2t+1}=1$. Thus, we must have $u_{2t+2}(\sigma,L_i)\geq t+1$ in both cases.
\end{proof}

Using Lemmas  \ref{lemma: confined} and \ref{lemma: u2 and u2t}, and \Cref{cor: conj config a equal}, we prove \Cref{theorem2}.

\begin{proof} [Proof of Theorem \ref{theorem2}]
    Consider a parallel chip-firing game on a complete bipartite graph $K_{a,a}$ with $2a^2-2a < |\sigma| < 2a^2$. Let $|\sigma_L|=\sum_{i=1}^a\sigma(L_i)$ and $|\sigma_R|=\sum_{i=1}^a\sigma(R_i)$ denote the total number of chips belonging to vertices in $L$ and $R$, respectively. Without loss of generality, let $|\sigma_L|\leq |\sigma_R|$.

    Since $2a^2-2a<|\sigma|<2a^2$, at least one vertex fires and at least one vertex waits each turn. Thus, $0<\frac{1}{2a}\leq A(\sigma)\leq\frac{2a-1}{2a}<1$. By \Cref{lemma: confined}, for all $1 \leq i < j \leq a$, we must have $\sigma(R_i) - \sigma(R_j) < a$. Then \[\sum_{j=1}^{a}\ell_R(c^j\sigma) = \sum_{j=1}^{a} \sigma(L_R)+j-a = |\sigma_R| + \frac{a(a+1)}{2}-a^2 = |\sigma_R| - \frac{a(a-1)}{2}.\]

 We also have $a^2 > |\sigma_L| \geq (\sigma(L_i)-a+1)(a-i)+\sigma(L_i)(i)$. Rearranging,
    \begin{equation} \label{eq: sigma bound}
        \begin{aligned}
        \sigma(L_i) < \frac{a^2+(a-1)(a-i)}{a} \leq 2a-i.
        \end{aligned}
    \end{equation}

    For a fixed vertex $L_i$, as $j$ ranges from $1$ to $a$, the value $c^j\sigma(L_i)$ takes on each of the values $\sigma(L_i)+i-a,\sigma(L_i)+i+1-a,\dots,\sigma(L_i)+i-1$ exactly once. By \Cref{eq: sigma bound}, $c^j\sigma(L_i) \geq a$ for at least $(\sigma(L_i)+i-1)-a+1=\sigma(L_i)+i-a\leq a$ values of $j$. (If $\sigma(L_i)+i-a\leq0$, we have $c^j\sigma(L_i) \geq a$ for 0 values of $j$.)  Thus,  \[\sum_{j=1}^ar_L(c^j\sigma) \geq \sum_{i=1}^a \sigma(L_i)+i-a = |\sigma_L|+\frac{a(a+1)}{2}-a^2 = |\sigma_L|-\frac{a(a-1)}{2}.\] 
    
    Note now that \[\sum_{j=1}^a \ell_R(c^j\sigma) + r_L(c^j\sigma) \geq |\sigma|-a(a-1) > a^2-a.\] 
    
    By the pigeonhole principle, we must have $\ell_R(c^j\sigma) + r_L(c^j\sigma) \geq a$ for some value of $j$. In $c^j\sigma$, every vertex in $R$ fires on at least one of turns $0$ and $1$, so by \Cref{lemma: u2 and u2t}, we have that $u_{2t}(c^j\sigma,R_i) \geq t$ for all $t$. Also, since every vertex in $R$ fires on at least one of turns $0$ and $1$, every vertex in $L$ fires on at least one of turns $1$ and $2$. After an index shift, we use \Cref{lemma: u2 and u2t} to conclude that $u_{2t+1}(c^j\sigma,L_i) \geq t$ for all $t$. By \Cref{cor: conj config a equal}, $A(\sigma) \geq \frac{1}{2}$.

    The same argument holds for the complement game $\sigma_c$, which has $|\sigma_c|=4a^2-2a-|\sigma| < 2a^2$. Thus, $A(\sigma_c) \geq \frac{1}{2}$. Since a vertex fires in the complement game exactly when it waits in the original, $A(\sigma) \leq \frac{1}{2}$.

    Therefore, $A(\sigma)=\frac{1}{2}$. By \Cref{non-clumpy}, the only possible firing sequences are 10 and 01. Thus, $T=2$.
\end{proof}

% ------------------------------------------------------------------------
% \section*{Acknowledgements}
% We thank Yunseo Choi for her endless support, for inspiring this project, and for connecting us through our work. We also thank Damien Jiang and Lionel Levine for their helpful correspondence.

% ------------------------------------------------------------------------
\end{document}